\newtheorem{theorem}{Theorem}[section]
\newtheorem{lemma}[theorem]{Lemma}
\theoremstyle{definition}
\newtheorem{definition}[theorem]{Definition}
\theoremstyle{remark}
\newtheorem{remark}[theorem]{Remark}
\newtheoremstyle{head}
{}
{}
{\bfseries}
{}
{}
{}
{.5em}
{}
\theoremstyle{head}
\newtheoremstyle{citing}
  {3pt}
  {3pt}
  {\itshape}
  {}
  {\bfseries}
  {:}
  {.5em}
  {\thmnote{#3}}
\theoremstyle{citing}
\begin{document}
\title[]{Periodic and falling-free motion\\ of an inverted spherical pendulum\\ with a moving pivot point}
\author{Ivan Polekhin}
\address{}
\email{ivanpolekhin@gmail.com}
\urladdr{}
\keywords{inverted pendulum, fixed point theory, periodic solution}
\date{November 10, 2014}
\begin{abstract}
For the system of an inverted spherical pendulum with friction and a periodically moving pivot point we prove the existence of at least one periodic solution with the additional property of being falling-free. The last means that the pendulum never becomes horizontal along the considered periodic solution. Presented proof is an application of some recent results in the fixed point theory.
\end{abstract}
\maketitle
%
%
%
%
\section{Introduction}
\label{section-introduction}
%
%

One of the problems originally presented in \textit{What is mathematics?} book by Courant and Robbins \cite{CR} is stated as follows:
\par
\textit{Suppose a train travels from station $A$ to station $B$ along a straight 
section of track. The journey need not be of uniform speed or acceleration.
The train may act in any manner, speeding up, slowing down, 
coming to a halt, or even backing up for a while, before reaching $B$. 
But the exact motion of the train is supposed to be known in advance; 
that is, the function $s = f(t)$ is given, where $s$ is the distance of the train 
from station $A$, and $t$ is the time, measured from the instant of departure. 
On the floor of one of the cars a rod is pivoted so that it may move 
without friction either forward or backward until it touches the floor. If it 
does touch the floor, we assume that it remains on the floor henceforth; 
this will be the case if the rod does not bounce. Is it possible to place 
the rod in such a position that, if it is released at the instant when the 
train starts and allowed to move solely under the influence of gravity 
and the motion of the train, it will not fall to the floor during the entire 
journey from $A$ to $B$?}
\par
As an exercise, the authors suggest to prove a more general result where a spherical inverted pendulum is considered instead of a planar pendulum. We consider slightly different problem introducing a viscous friction force acting on the mass point of the pendulum and prove that for an arbitrarily small friction coefficient a periodic solution always exists. Moreover, we show that the considered periodic solution never approaches horizontal plane.
\par
In the first section, we present the system of governing dynamical equations for the system and prove some properties which we are going to use in the second section where we apply some recent results  by Srzednicki, W\'{o}jcik and Zgliczynski \cite{SWZ} to our system.
%
%
\section{Governing equations}
\label{first-section}
%
Let $Oxyz$ be an orthogonal moving reference frame such that $O$ coincide with the pivot point of the pendulum, $Ox$ and $Oy$ axis are in a horizontal plane and always remain parallel to themselves at the initial moment of time; $Oz$ is vertical and oriented in an opposite way to the gravitational force. By $r_{moving}$ we denote the radius vector of the mass point in $Oxyz$. Let $x$, $y$, and $z$ be its components. 
\begin{figure}[!h]
\centering
\def\svgwidth{320px}
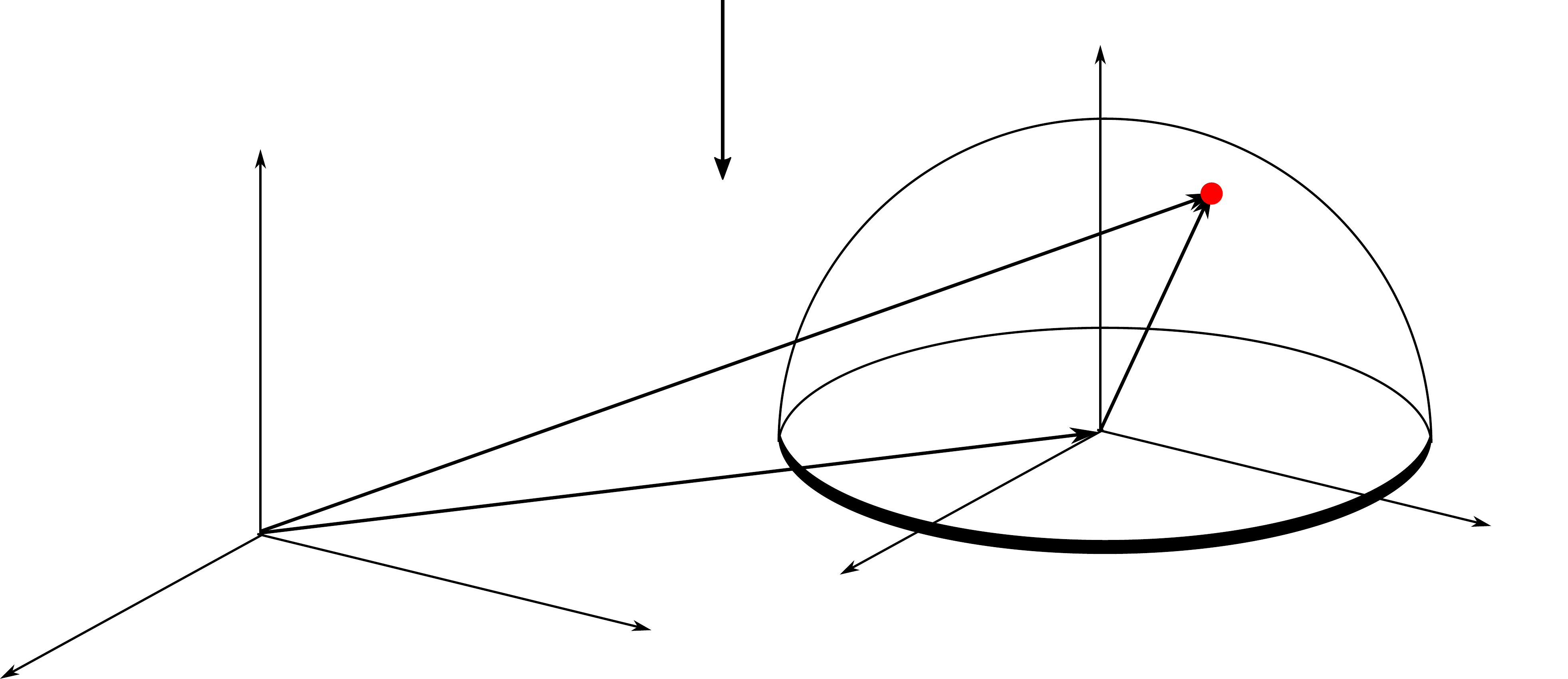
\caption{Fixed and moving reference frames}
\end{figure}
Since the pivot point moves periodically in the horizontal plane, then for the radius vector $r_{fixed}$ of the mass point in some fixed reference frame with its axis parallel to the axis of $Oxyz$, we have
\begin{equation*}
r_{fixed} = r_{moving} + \rho,
\end{equation*}
where $\rho = \xi e_x + \eta e_y$; we assume $\xi, \eta$ to be $2\pi$-periodic smooth functions.
Obviously, we have similar relations for the velocities and accelerations
\begin{equation*}
\dot r_{fixed} = \dot r_{moving} + \dot \rho, \quad \ddot r_{fixed} = \ddot r_{moving} + \ddot \rho.
\end{equation*}
The mass point moves under the action of the gravitational force, the viscous friction force, and the force of constraint
\begin{equation}
\label{eq1}
m\ddot r_{fixed} = F_{grav} + F_{friction} + N.
\end{equation}
Here $N$ is the constraint force which is parallel to the radius vector $r_{moving}$.
\begin{equation*}
N = |N|\frac{r_{moving}}{|r_{moving}|} = |N|e_n.
\end{equation*}
\begin{remark}
Forces of constraint are the forces which allows one to consider a system with constraints as constraint-free with additional unknown \textit{apriori} forces. More on constraint forces and their use in mechanics one can find in \cite{GPS}.
\end{remark}
We assume that the friction force is determined by the following model 
\begin{equation}
\label{eq2}
F_{friction} = -\gamma (\dot r_{fixed} - \dot \rho) = -\gamma \dot r_{moving}, \quad \gamma > 0.
\end{equation}
We can now rewrite (\ref{eq1}) as follows
\begin{equation}
\label{eq1mod}
m\ddot r_{moving} = |N|e_n - mg e_z  - m\ddot\rho - \gamma \dot r_{moving}.
\end{equation}
\begin{remark}
 For the above equation (\ref{eq1mod}), the solution function $r_{moving} (\cdot, r_0, \dot r_0) \colon \mathbb{R} \to S^2$ is determined by the initial conditions $r_0$ and $\dot r_0$.
\end{remark}
\par 
Since functions $\xi$ and $\eta$ are $2\pi$-periodic, then the extended phase space of our system can be considered as $\mathbb{R}/2\pi\mathbb{Z} \times T{S}^2$. Let $F \colon \mathbb{R}/2\pi\mathbb{Z} \times T{S}^2 \to \mathbb{R}$ be a function defined by the following equation
\begin{equation}
F = \frac{m}{2}(\dot r_{moving}, \dot r_{moving}).
\end{equation}
Now consider the submanifold $F = c$ of the extended phase space and show that if $c > 0$ is large, then along the solutions starting at $F = c$ the function $F$ is locally decreasing. More specifically,
\begin{lemma}
\label{lem1}
There exists $c>0$ such that
\begin{equation*}
\dot F\Big|_{F=c} < 0.
\end{equation*}
\end{lemma}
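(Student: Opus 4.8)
The plan is to differentiate $F$ along a solution of \eqref{eq1mod} and to show that for large enough values of $F$ the friction term forces the derivative to be negative. Writing $\dot F = m(\ddot r_{moving}, \dot r_{moving})$ and substituting the equation of motion \eqref{eq1mod}, I would split the result into the four contributions coming from the constraint force, gravity, the inertial forcing $-m\ddot\rho$, and friction:
\[
\dot F = (|N|e_n, \dot r_{moving}) - mg(e_z, \dot r_{moving}) - m(\ddot\rho, \dot r_{moving}) - \gamma(\dot r_{moving}, \dot r_{moving}).
\]

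The first key observation is that the constraint term drops out. Since the pendulum has fixed length, $(r_{moving}, r_{moving})$ is constant, hence $(r_{moving}, \dot r_{moving}) = 0$, and as $e_n$ is parallel to $r_{moving}$ we also get $(|N|e_n, \dot r_{moving}) = 0$. The friction term contributes $-\gamma(\dot r_{moving}, \dot r_{moving}) = -(2\gamma/m)F$, which is negative and grows \emph{linearly} in $F$; this is the term that does the work. The remaining two terms $-mg(e_z, \dot r_{moving}) - m(\ddot\rho, \dot r_{moving})$ I would bound from above by Cauchy--Schwarz: because $\xi$ and $\eta$ are smooth and $2\pi$-periodic, $\ddot\rho$ is uniformly bounded, say $|\ddot\rho| \le M$, and together with $|e_z| = 1$ this yields a bound by $(mg + mM)|\dot r_{moving}| = (mg + mM)\sqrt{2F/m}$, that is, a term growing only like $\sqrt{F}$.

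Collecting everything gives $\dot F \le A\sqrt{F} - (2\gamma/m)F = \sqrt{F}\,\bigl(A - (2\gamma/m)\sqrt{F}\bigr)$ for a constant $A$ depending only on $m$, $g$, and $M$, and this estimate is uniform over the sphere and over the phase of the moving pivot. The conclusion is then immediate: the right-hand side is strictly negative as soon as $F > (mA/2\gamma)^2$, so any $c$ exceeding this threshold works.

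I do not expect a genuine obstacle here; the only points requiring care are verifying that the normal-force term really cancels (the constraint keeps $r_{moving}$ on $S^2$, so that $\dot r_{moving}$ is tangent and orthogonal to $e_n$) and that the bound on $\ddot\rho$ is uniform, which follows from compactness of the periodic time circle $\mathbb{R}/2\pi\mathbb{Z}$. It is worth noting that the argument succeeds for every friction coefficient $\gamma > 0$, however small, which matches the claim made in the introduction.
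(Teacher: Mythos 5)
Your proposal is correct and follows essentially the same route as the paper: differentiate $F$ along solutions, substitute the equation of motion, bound the gravity and $\ddot\rho$ terms by Cauchy--Schwarz using the uniform bound on $\ddot\rho$ from periodicity, and let the friction term $-2\gamma F/m$, linear in $F$, dominate the $\sqrt{F}$-order terms for large $c$. The only difference is cosmetic: you explicitly justify the cancellation $(|N|e_n, \dot r_{moving}) = 0$ via tangency to the sphere, a step the paper drops silently between its two displayed computations.
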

\begin{proof}
From the definition of $F$ and (\ref{eq1}) we easily have
\begin{equation*}
\begin{aligned}
\dot F  &=m(\ddot r_{moving}, \dot r_{moving}) = m(\ddot r_{fixed} - \ddot \rho, \dot r_{moving}) \\
&= ( F_{grav} + F_{friction} + N, \dot r_{moving}) - m(\ddot\rho, \dot r_{moving}).
\end{aligned}
\end{equation*}
Therefore, taking into account (\ref{eq2}) and that $|\dot r_{moving}| = (2c/m)^{1/2}$, we obtain
\begin{equation*}
\begin{aligned}
 \dot F &=  (F_{grav}, \dot r_{moving}) - \gamma |\dot r_{moving}|^2 - m(\ddot \rho, \dot r_{moving})\\
 &\leqslant|F_{grav}||\dot r_{moving}| - \gamma|\dot r_{moving}|^2+m|\ddot\rho||\dot r_{moving}| \\
 &=(2c/m)^{1/2}|F_{grav}| - 2c\gamma/m + (2cm)^{1/2}|\ddot\rho|.
\end{aligned}
\end{equation*}
Since $|\ddot\rho|$ is bounded, then for $c > 0$ sufficiently large we have $\dot F <0$. 
\end{proof}
Now we prove that if the pendulum, as well as its velocity vector, is in the horizontal plane, then at least locally the mass point is falling down, i.e. the following lemma is true.
\begin{lemma}
\label{lem2}
If the solution $r_{moving}$ of (\ref{eq1}) at the time $t$ satisfies $(r_{moving}(t), e_z) = z(t) = 0$ and $(\dot r_{moving}(t), e_z) = \dot z(t) =0$, then $(\ddot r_{moving}(t), e_z) = \ddot z(t) < 0$.
\end{lemma}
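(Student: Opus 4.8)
The plan is to project the equation of motion (\ref{eq1mod}) directly onto the vertical axis and read off the sign of $\ddot z$. Taking the inner product of (\ref{eq1mod}) with $e_z$ and using $(e_z, e_z) = 1$ gives
\begin{equation*}
m\ddot z = |N|(e_n, e_z) - mg - m(\ddot\rho, e_z) - \gamma(\dot r_{moving}, e_z).
\end{equation*}
The whole argument then reduces to evaluating the four terms on the right-hand side at the configuration described in the hypothesis, and the point is that three of them vanish.

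The heart of the matter is the constraint term. Since $N$ is parallel to $e_n = r_{moving}/|r_{moving}|$, its vertical component is $|N|(e_n, e_z) = |N|\, z/|r_{moving}|$, which vanishes precisely when $z = 0$. Thus the assumption that the mass point lies in the horizontal plane through the pivot removes the unknown reaction magnitude $|N|$ from the computation entirely, so I never need to determine it. Next, because the pivot is prescribed to move only horizontally, $\ddot\rho = \ddot\xi\, e_x + \ddot\eta\, e_y$ is orthogonal to $e_z$, whence $(\ddot\rho, e_z) = 0$. Finally the friction term contributes $-\gamma(\dot r_{moving}, e_z) = -\gamma\dot z = 0$ by the second hypothesis $\dot z(t) = 0$. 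Only the gravitational term survives, leaving $m\ddot z = -mg$, that is $\ddot z = -g < 0$.

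I do not expect a genuine obstacle here: once one observes that at $z = 0$ the radial direction $e_n$ lies in the horizontal plane, the statement is essentially a one-line projection. The only facts requiring a word of justification are the geometric identity $(e_n, e_z) = z/|r_{moving}|$, which is immediate from $e_n = r_{moving}/|r_{moving}|$ and $(r_{moving}, e_z) = z$, and the fact that the prescribed pivot motion is confined to the $Oxy$-plane, so that $\ddot\rho$ carries no vertical component; both follow directly from the definitions introduced in this section. It is worth emphasizing in the write-up that the conclusion is independent of the friction coefficient $\gamma$ and of the pivot motion, reflecting the intuition that a horizontal pendulum moving horizontally can only be pushed downward by gravity.
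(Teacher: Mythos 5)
Your proof is correct and follows essentially the same route as the paper: you project the equation of motion onto $e_z$ and observe that the constraint term vanishes because $N \parallel r_{moving}$ lies in the horizontal plane when $z=0$, the friction term vanishes since $\dot z = 0$, and $\ddot\rho$ is horizontal, leaving $\ddot z = -g < 0$. The only cosmetic difference is that you work from (\ref{eq1mod}) while the paper projects (\ref{eq1}); these are equivalent rearrangements of the same equation.
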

\begin{proof}
For the pendulum being in the horizontal position and moving horizontally as well, we have the following:
\begin{enumerate}
\item the friction force is always parallel to the radius vector $\dot r_{moving}$, therefore, at the given conditions, the friction force is directed horizontally;
\item the constraint force is in the horizontal plane if $z = 0$;
\item $Oxyz$ is moving along the horizontal plane, therefore, $(\dot\rho, e_z)$ = 0.
\end{enumerate}
\begin{figure}[!h]
\centering
\def\svgwidth{320px}
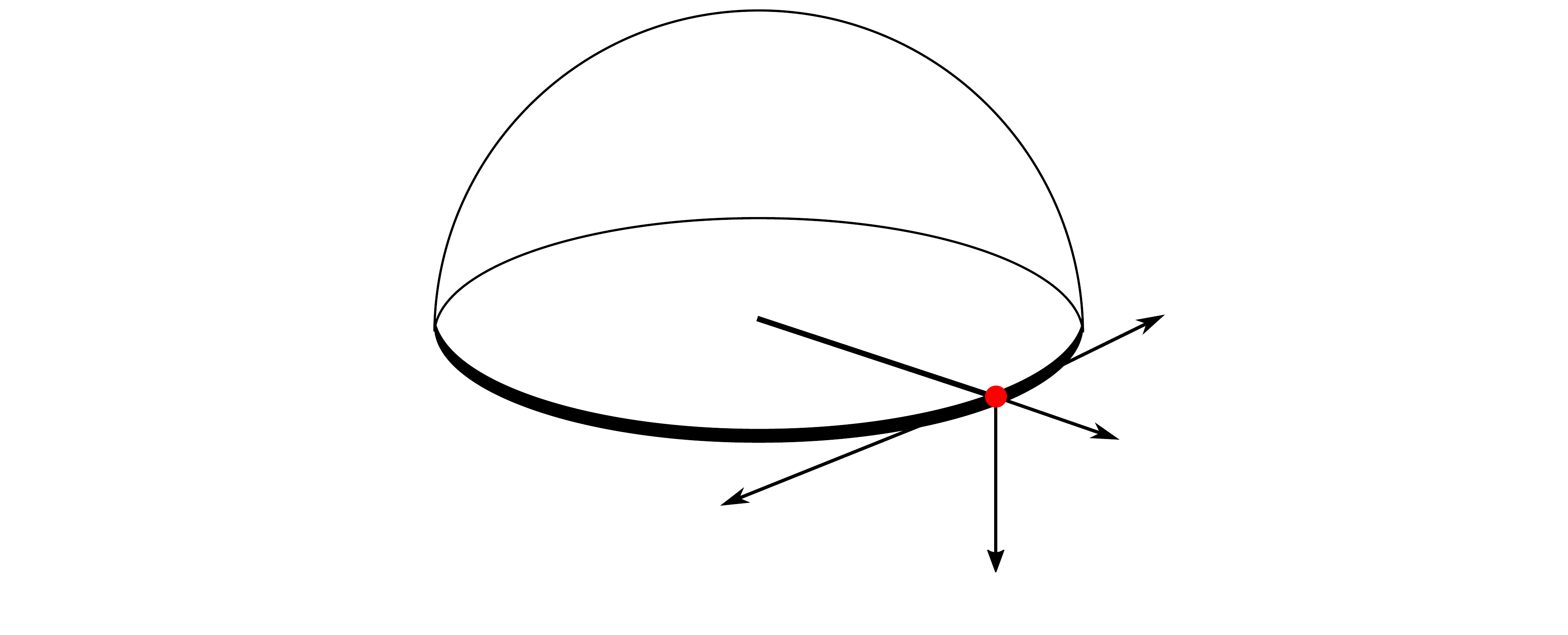
\caption{Forces acting on the mass point when the pendulum is horizontal.}
\end{figure}
Taking into account the above, from (\ref{eq1}) we obtain
\begin{equation*}
\begin{aligned}
(m\ddot r_{moving}, e_z) &= (F_{grav} + F_{friction} + N - \ddot\rho, e_z) \\
&= -mg -\gamma(\dot r_{fixed} - \dot\rho, e_z) + (N, e_z) - (\ddot\rho,e_z) = -mg < 0.
\end{aligned}
\end{equation*}
\end{proof}

%
\section{Main result}
\label{second-section}
%
In this section we are going to apply some recent developments in fixed point theory by Srzednicki, W\'{o}jcik and Zgliczynski \cite{SWZ} to our system and prove the existence of a falling-free periodic solution. First, following \cite{SWZ}, we introduce some definitions which we slightly modify for our use.
\par
From now on, we assume that $v \colon \mathbb{R}\times M \to TM$ is a smooth time-dependent vector-field on a manifold $M$. 
\begin{definition}
For $t_0 \in \mathbb{R}$ and $x_0 \in M$, the map $t \mapsto x(t,t_0,x_0)$ is the solution for the initial value problem for the system $\dot x = v(t, x)$, such that $x(0,t_0,x_0)=x_0$.
\end{definition}
\begin{definition}
Let $W \subset \mathbb{R} \times M$. Define the {exit set} $W^-$ as follows. A point $(t,x)$ is in $W^-$ if there exists $\delta>0$ such that $(t+t_0, x(t,t_0,x_0)) \notin W$ for all $t \in (0,\delta)$.
\end{definition}
\begin{definition}
We call $W \subset \mathbb{R}\times M$ a {Wa\.{z}ewski block} for the system $\dot x = v(t,x)$ if $W$ and $W^-$ are compact.
\end{definition}
 Now introduce some notations. By $\pi_1$ and $\pi_2$ we denote the projections of $\mathbb{R}\times M$ onto $\mathbb{R}$ and $M$ respectively. If $Z \subset \mathbb{R}\times M$, $t\in\mathbb{R}$, then we denote
\begin{equation*}
Z_t=\{z \in M \colon (t,z) \in Z\}.
\end{equation*}
\begin{definition}
A set $W \subset [a,b] \times M$ is called a segment over $[a,b]$ if it is a block with respect to the system $\dot x = v(t,x)$ and the following conditions hold:
\begin{itemize}
\item there exists a compact subset $W^{--}$ of $W^-$ called the essential exit set such that
\begin{equation*}
W^-=W^{--}\cup(\{b\}\times W_b),\quad W^-\cap([a,b)\times M) \subset W^{--},
\end{equation*}
\item there exists a homeomorphism $h\colon [a,b]\times W_a \to W$ such that $\pi_1 \circ h = \pi_1$ and
\begin{equation}
\label{cond-2}
h([a,b]\times W_a^{--})=W^{--}.
\end{equation}
\end{itemize}
\end{definition}
\begin{definition}
Let $W$ be a segment over $[a,b]$. It is called periodic if
\begin{equation*}
(W_a,W_a^{--})=(W_b,W_b^{--}).
\end{equation*}
\end{definition}
\begin{definition}
For periodic segment $W$, we define the corresponding monodromy map $m$ as follows
\begin{equation*}
m\colon W_a\to W_a, \quad m(x) = \pi_2 h(b,\pi_2 h^{-1}(a,x)).
\end{equation*}
\end{definition}
\begin{remark}
The monodromy map $m$ is a homeomorphism. Moreover, it can be proved that a different choice of $h$ satisfying \ref{cond-2} leads to the monodromy map homotopic to $m$. It follows that the isomorphism in homologies
\begin{equation*}
\mu_W = H(m) \colon H(W_a,W_a^{--}) \to H(W_a, W_a^{--})
\end{equation*}
is an invariant of $W$.
\end{remark}
\begin{theorem} 
\label{th1}
\cite{SWZ} Let W be a periodic segment over $[a,b]$. Then the set
\begin{equation*}
U = \{ x_0 \in W_a \colon x(t-a,a,x_0) \in W_t\setminus W_t^{--}\,\mbox{for all}\,\, t \in [a,b] \}
\end{equation*}
is open in $W_a$ and the set of fixed points of the restriction $x(b-a,a,\cdot)|_U \colon U \to W_a$ is compact. Moreover, if $W$ and $W^{--}$ are ANRs then
\begin{equation*}
\mathrm{ind}(x(b-a,a,\cdot)|_U) = \Lambda(m) - \Lambda(m|_{W_a^{--}}).
\end{equation*}
Where by $\Lambda(m)$ and $\Lambda(m|_{W_a^{--}})$ we denote the Lefschetz number of $m$ and $m|_{W_a^{--}}$ respectively. In particular, if $\Lambda(m) - \Lambda(m|_{W_a^{--}}) \ne 0$ then $x(b-a,a,\cdot)|_U$ has a fixed point in $W_a$.
\end{theorem}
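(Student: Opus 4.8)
The plan is to realize the conclusion through the fixed point index of the time-translation (Poincar\'{e}) map $P := x(b-a,a,\cdot)$ together with the Wa\.{z}ewski retraction method, transporting everything to the single fibre $W_a$ by means of the trivializing homeomorphism $h$. The homeomorphism $h\colon[a,b]\times W_a\to W$ with $\pi_1\circ h=\pi_1$ lets us write each fibre as $W_t=\pi_2 h(\{t\}\times W_a)$ and identify the essential exit set fibrewise via \eqref{cond-2}; this converts the segment into the product $[a,b]\times W_a$, on which the flow is a perturbation of the trivial translation, and the periodicity $(W_a,W_a^{--})=(W_b,W_b^{--})$ makes both $P$ and the monodromy $m$ into self-maps of the same pair $(W_a,W_a^{--})$.

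First I would prove that $U$ is open and that the fixed point set of $P|_U$ is compact. Openness is a consequence of continuous dependence on initial conditions: the defining condition requires the trajectory to lie in $W_t\setminus W_t^{--}$ for every $t$ in the compact interval $[a,b]$, and since $W$ is compact while $W^{--}$ is a compact (hence closed) subset, $W\setminus W^{--}$ is relatively open, so the condition persists under small perturbations of $x_0$ uniformly in $t$. For compactness of the fixed point set, note that a fixed point of $P|_U$ is the initial value of a trajectory which, by periodicity of the pair, closes up into a periodic solution staying in the interiors $W_t\setminus W_t^{--}$; such points form a closed subset of the compact set $W_a$ bounded away from $W_a^{--}$, and I would verify this separation from the block structure, which forces any trajectory meeting $W^{--}$ to leave $W$ immediately and hence never to return.

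The substance of the theorem is the index formula, and this is the step I expect to be the main obstacle. The strategy is: (i) use $h$ to push $P$ forward to a self-map $\tilde P$ of the pair $(W_a,W_a^{--})$; (ii) construct an admissible homotopy, through maps of pairs having no fixed points on the essential exit set, joining $\tilde P$ to $m$, so that homotopy invariance and the commutativity property of the fixed point index show that $\mathrm{ind}(P|_U)$ coincides with the fixed point index of $m$ on $W_a$ relative to $W_a^{--}$; (iii) invoke the Lefschetz--Hopf theorem for the pair (this is precisely where the ANR hypotheses on $W$ and $W^{--}$ enter, guaranteeing the index is defined and equals the relevant Lefschetz number) to identify that relative index with the Lefschetz number $L(\mu_W)$ of the induced automorphism $\mu_W=H(m)$ on $H_*(W_a,W_a^{--})$; and (iv) apply the additivity of the Lefschetz number along the long exact homology sequence of the pair, namely $\Lambda(m)=\Lambda(m|_{W_a^{--}})+L(\mu_W)$, to rewrite $L(\mu_W)=\Lambda(m)-\Lambda(m|_{W_a^{--}})$.

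The genuinely delicate point in step (ii) is controlling trajectories near the exit set throughout the homotopy. Here I would use the Wa\.{z}ewski retraction lemma to produce a retraction, onto $W^{--}$, of the set of points that leave $W$ through the essential exit set; this both supplies the continuity needed to keep the homotopy fixed-point-free on $W_a^{--}$ and justifies replacing the Poincar\'{e} map by the monodromy map without altering the index. All remaining ingredients---continuous dependence, excision and additivity of the fixed point index, and the homological additivity of the Lefschetz number over the long exact sequence---are standard once the pair $(W_a,W_a^{--})$ and the admissible homotopy have been set up.
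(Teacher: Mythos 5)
First, a point of calibration: the paper does not prove this theorem at all --- it is imported verbatim from \cite{SWZ} and used as a black box, so there is no in-paper argument to compare yours against. Measured instead against the actual proof in \cite{SWZ}, your outline tracks the known strategy in most respects: openness of $U$ and compactness of the fixed point set do follow from continuous dependence combined with the block structure (your later remark that a trajectory meeting $W^{--}$ must leave $W$ immediately is exactly the ingredient needed; note that relative openness of $W\setminus W^{--}$ alone is not enough for openness of $U$ --- you also need that over $[a,b)$ the only way out of $W$ is through $W^-\cap([a,b)\times M)\subset W^{--}$, so a nearby trajectory cannot slip out of $W$ elsewhere). Likewise your step (iv), the additivity $\Lambda(m)=\Lambda(m|_{W_a^{--}})+L(\mu_W)$ over the exact sequence of the pair, is correct and is indeed where the formula $\Lambda(m)-\Lambda(m|_{W_a^{--}})$ comes from; here it matters that $m$ genuinely is a map of pairs, since (\ref{cond-2}) together with periodicity gives $m(W_a^{--})=W_a^{--}$.

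The genuine gap is in your step (ii). The map $P=x(b-a,a,\cdot)$ is \emph{not} a self-map of the pair $(W_a,W_a^{--})$, and cannot be pushed forward to one: trajectories starting in $W_a^{--}$ leave $W$ immediately and may thereafter go anywhere in $M$ --- indeed $P$ need not even be \emph{defined} on $W_a^{--}$, since solutions are only guaranteed to exist as long as they remain in the compact set $W$. So there is no map of pairs $\tilde P$, and an ``admissible homotopy through maps of pairs joining $\tilde P$ to $m$'' cannot be set up as stated; the relative Lefschetz--Hopf theorem is not applicable to $P$. What the proof in \cite{SWZ} actually does is interpolate differently: for $s\in[a,b]$ one follows the flow from time $a$ to time $s$ \emph{only while the trajectory stays in $W\setminus W^{--}$}, and then transports the fibre $W_s$ back to $W_b=W_a$ using the segment structure $h$; the Wa\.{z}ewski exit-time continuity (which you invoke, but for the wrong purpose) is used to extend or truncate these maps continuously, producing a compactly-fixed homotopy on open, $s$-dependent domains. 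Homotopy invariance and additivity/excision of the fixed point index for maps of ANRs --- not a pair-map homotopy --- then split the total index $\Lambda(m)$ at $s=a$ into the contribution $\mathrm{ind}(P|_U)$ at $s=b$ plus a term identified with $\Lambda(m|_{W_a^{--}})$ coming from the fixed points created in the escaped region. Your homological bookkeeping is right, but the bridge between $P|_U$ and $m$ has to be this varying-domain, exit-time-truncated homotopy; as written, your step (ii) would fail.
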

Using the above theorem and the lemmas from the previous part, we can now prove that there exists a periodic falling-free solution of (\ref{eq1mod}).  
\begin{theorem}For any $\gamma > 0$ for the system (\ref{eq1mod}), there exist $r_0$ and $\dot r_0$ such that solution $r_{moving}(t, r_0, \dot r_0)$ with the initial conditions $r_{moving}(0, r_0, \dot r_0) = r_0$ and $\dot r_{moving}(0, r_0, \dot r_0) = \dot r_0$ is periodic and $(r_{moving}(t, r_0, \dot r_0), e_z) >0$ for all $t$.
\end{theorem}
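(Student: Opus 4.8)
The plan is to apply Theorem \ref{th1} to a periodic segment assembled directly from the two lemmas of the previous section. Fix $c>0$ as in Lemma \ref{lem1} and set
\begin{equation*}
K = \{(r_{moving}, \dot r_{moving}) \in TS^2 : (r_{moving}, e_z) \ge 0,\ F \le c\}, \qquad W = [0,2\pi] \times K .
\end{equation*}
Since $S^2$ is compact and the constraint $F \le c$ bounds the fibre velocities, $K$ is compact, and because the field (\ref{eq1mod}) is $2\pi$-periodic in $t$, the set $W$ is the natural candidate for a periodic segment over $[0,2\pi]$. I would first identify the exit set. On the face $F=c$, $z>0$, Lemma \ref{lem1} gives $\dot F<0$, so trajectories immediately enter $\{F\le c\}$ and this face carries no exit points. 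On the face $z=(r_{moving},e_z)=0$ the sign of $\dot z=(\dot r_{moving},e_z)$ decides: if $\dot z<0$ the trajectory leaves $\{z\ge 0\}$ at once, while if $\dot z=0$ then Lemma \ref{lem2} gives $\ddot z<0$, so the trajectory again leaves immediately. Hence the essential exit set is
\begin{equation*}
W^{--} = [0,2\pi] \times K^{--}, \qquad K^{--} = \{(r_{moving},\dot r_{moving}) \in K : z=0,\ \dot z \le 0\},
\end{equation*}
which is compact, and $W^- = W^{--}\cup(\{2\pi\}\times K)$, so $W$ is a Wa\.{z}ewski block.

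Next I would verify the remaining segment conditions. Because $W$, $K$ and $K^{--}$ do not depend on $t$, the product map $h(t,x)=(t,x)$ is a homeomorphism with $\pi_1\circ h=\pi_1$ and $h([0,2\pi]\times K^{--})=W^{--}$, so $W$ is a segment over $[0,2\pi]$; it is periodic since $(W_0,W_0^{--})=(K,K^{--})=(W_{2\pi},W_{2\pi}^{--})$. With this $h$ the monodromy map is the identity $m=\mathrm{id}_K$. Noting that $W$ and $W^{--}$ are compact manifolds with corners, hence ANRs, Theorem \ref{th1} applies and gives
\begin{equation*}
\mathrm{ind}(x(2\pi,0,\cdot)|_U) = \Lambda(\mathrm{id}_K) - \Lambda(\mathrm{id}_{K^{--}}) = \chi(K) - \chi(K^{--}).
\end{equation*}

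The evaluation of these Euler characteristics is then purely topological. The base $\{z\ge 0\}$ is a closed disk and the fibre $\{F\le c\}$ is a disk in each tangent plane, so retracting the fibres onto the zero section shows $K$ deformation retracts onto $\{z\ge 0\}\cong D^2$; thus $K$ is contractible and $\chi(K)=1$. For $K^{--}$ the base $\{z=0\}$ is the equator $S^1$ and each fibre $\{\dot z\le 0,\ F\le c\}$ is a convex half-disk, so the same fibrewise retraction gives $K^{--}\simeq S^1$ and $\chi(K^{--})=0$. Therefore the index equals $1-0=1\ne 0$, and Theorem \ref{th1} produces $x_0=(r_0,\dot r_0)\in U$ fixed by $x(2\pi,0,\cdot)$, i.e.\ a $2\pi$-periodic solution of (\ref{eq1mod}) whose orbit lies in $W_t\setminus W_t^{--}$ for every $t$.

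Finally I would upgrade ``staying in $W$'' to the strict falling-free property. The orbit lies in $\{z\ge 0\}$ for all $t$ by periodicity; if $z(t_*)=0$ at some instant, then $t_*$ is a minimum of the nonnegative function $z$, forcing $\dot z(t_*)=0$, which places the point in $K^{--}=W_{t_*}^{--}$ and contradicts membership in $W_{t_*}\setminus W_{t_*}^{--}$ (equivalently, a minimum would require $\ddot z(t_*)\ge 0$, contradicting Lemma \ref{lem2}). Hence $(r_{moving}(t,r_0,\dot r_0),e_z)>0$ for all $t$, as claimed. The main obstacle in making this rigorous is the careful verification that $W$ is genuinely a periodic segment --- in particular that the exit set is exactly $K^{--}$, with no hidden exit points along the $F=c$ face or along the degenerate stratum $z=0,\ \dot z=0$ --- since it is precisely this step that feeds Lemma \ref{lem1} and Lemma \ref{lem2} into the hypotheses of Theorem \ref{th1}.
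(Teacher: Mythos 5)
Your proposal is correct and follows essentially the same route as the paper: the same block $W=\{F\leqslant c,\ z\geqslant 0\}$ built from Lemma \ref{lem1} and Lemma \ref{lem2}, the same identification of the essential exit set $\{z=0,\ \dot z\leqslant 0\}$, the identity monodromy, and the computation $\Lambda(\mathrm{id}_{W_0})-\Lambda(\mathrm{id}_{W_0^{--}})=\chi(D^2)-\chi(S^1)=1$ feeding into Theorem \ref{th1}. If anything, you are slightly more careful than the paper in spelling out the ANR property and the final upgrade from $z\geqslant 0$ to strict positivity (a minimum with $z(t_*)=0$ forces $\dot z(t_*)=0$, contradicting Lemma \ref{lem2}), a step the paper leaves implicit.
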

\begin{proof}
Let $W \subset \mathbb{R}/2\pi\mathbb{Z} \times T{S}^2$ be a manifold with a boundary defined by the inequalities $F \leqslant c$, $z \geqslant 0$, where $c$ is obtained from lemma \ref{lem1}. One can easily prove that $W$ is diffeomorphic to $\mathbb{R}/2\pi\mathbb{Z} \times {D}^2 \times D^2$, where ${D}^2$ is a 2-dimensional disk (with boundary). Let us now prove that $W$ is a periodic segment over $[0, 2\pi]$ for (\ref{eq1}). 
\par
Indeed, from lemma \ref{lem1} we see that the essential exit set is entirely on the boundary $z = 0$. Moreover, from lemma \ref{lem2} we obtain that the points which satisfies $z = 0$ and $\dot z > 0$ do not belong to the essential exit set, at the same time, if for some point we have $z = 0$, $\dot z \leqslant 0$ then this point is in the essential exit set. Therefore, the essential exit set is  compact. Homeomorphism $h$ in our case is an identity map, i.e. $[0, 2\pi]\times W_0 = W$.
\begin{figure}[!h]
\centering
\tiny{
\def\svgwidth{320px}
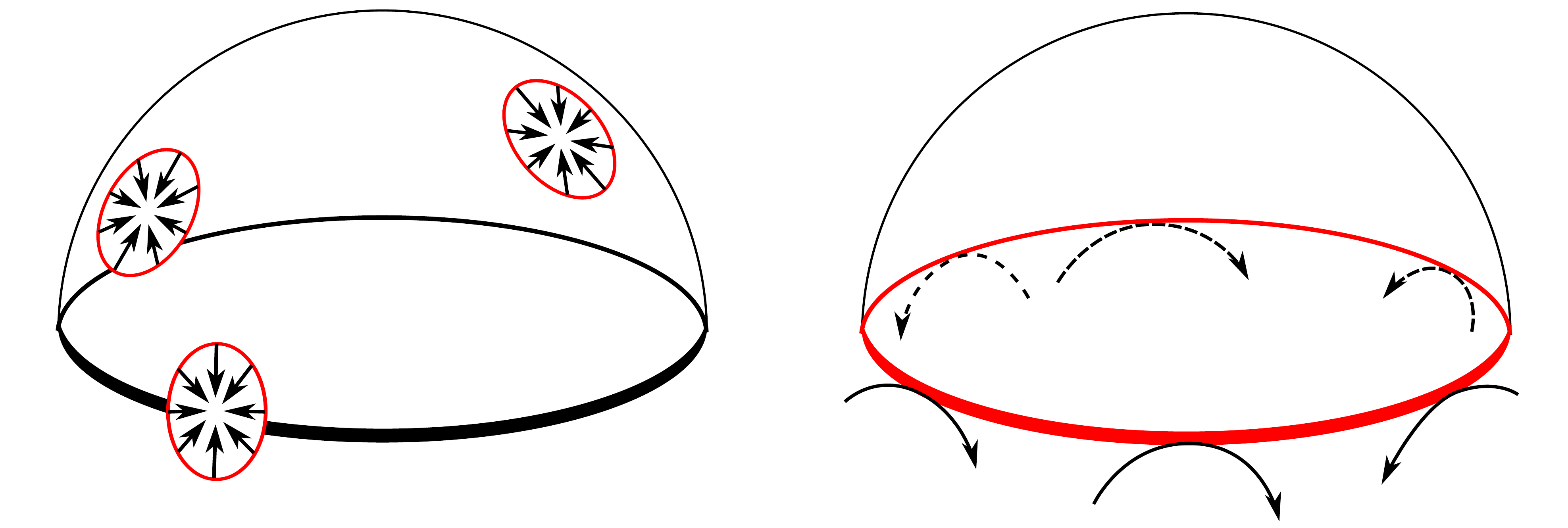
}
\caption{Main features of behaviour of the solutions at the boundary (in red) of W: a) trajectories can leave $W$ only through the $z=0$ part of the boundary b) if for a point we have $z = 0$ then the solution starting from it, leaves $W$ iff $\dot z \leqslant 0$.}
\end{figure}
\par
Finally, one can easily see that $W_0$ is homotopic to $D^2$ and $W_0^{--}$ is homotopic to ${S}^1$. Therefore, taking into account that $m = \mathrm{id}$, we get
\begin{equation*}
 \Lambda(\mathrm{id}_{W_0}) - \Lambda(\mathrm{id}_{W_0^{--}}) = \chi(D^2) - \chi({S}^1) = 1 - 0 \ne 0
 \end{equation*}
  and theorem \ref{th1} can be applied.
\end{proof}

%
%
%
%

\end{document}